\newtheorem{theorem}{Theorem}[section]
\newtheorem{corollary}[theorem]{Corollary}
\newtheorem{claim}{Claim}
\newtheorem{conjecture}[theorem]{Conjecture}
\newtheorem*{SGT}{Sylvester-Gallai Theorem}
\newcommand{\con}{/}
\newcommand{\cM}{\mathcal{M}}
\newcommand{\bC}{\mathbb C}
\newcommand{\cI}{\mathcal I}
\begin{document}

\sloppy

\title[A Sylvester-Gallai-type theorem]{A Sylvester-Gallai-type theorem for complex-representable matroids}

\author{Jim Geelen and Matthew E. Kroeker}\thanks{This research is partially supported by an NSERC Discovery Grant [RGPIN-2016-03886] and by an NSERC Postgraduate Scholarship [Application No. PGSD3 - 547508 - 2020]}
\address{Department of Combinatorics and Optimization,
University of Waterloo, Waterloo, Canada}

\subjclass{51M04}
\keywords{Complex geometry, matroids, Sylvester-Gallai Theorem, Kelly's Theorem}
\date{\today}

\begin{abstract}
The Sylvester-Gallai Theorem states that every rank-$3$ real-representable matroid has a two-point line. We prove that, for each $k\ge 2$, every complex-representable matroid with rank at least $4^{k-1}$ has a rank-$k$ flat with exactly $k$ points. For $k=2$, this is a well-known result due to Kelly, which we use in our proof. A similar result was proved earlier by Barak, Dvir, Wigderson, and  Yehudayoff and later refined by Dvir, Saraf, and Wigderson, but we get slightly better bounds with a more elementary proof. 
\end{abstract}

\maketitle

\newtheorem*{thm}{Main Theorem}
\newtheorem*{KT}{Kelly's Theorem}
\newtheorem*{HT}{Hansen's Theorem}

\section{Introduction}
The following result was conjectured by Sylvester~\cite{Sylvester} in 1893 and proved by Gallai~\cite{Gallai} in 1944, although Melchior~\cite{Melchior} had already proved an equivalent result about line arrangements in 1941.
\begin{SGT}
Given any finite set of points in the real plane, not all on a line, there is a line in the plane that contains exactly two of them.
\end{SGT}
In matroid-theoretic terminology, which we use throughout the paper, the Sylvester-Gallai Theorem states that
every rank-$3$ real-representable matroid has a two-point line. We include a brief introduction to matroid theory, in Section~2, for the sake of geometers who might be interested in these results. 

With a view to generalizing the Sylvester-Gallai Theorem, one might hope that every rank-$4$ real-representable matroid contains a $3$-point plane.
Motzkin~\cite{motzkin} observed that this fails for the direct sum of two lines, but proved that there is always a plane that is the direct sum
of a point and a line, and he conjectured a generalization in higher dimensions. Following conventions from geometry, we say that 
a rank-$k$ flat is {\em ordinary} if it is the direct sum of a point and a rank-$(k-1)$ flat. (Note that a {\em point} is a rank-$1$ flat which, in a non-simple matroid, may have more than one element.) Motzkin's conjecture, stated below, was later proved by Hansen~\cite{Hansen}. 
\begin{HT}
Every real-representable matroid with rank at least $3$ has an ordinary hyperplane.
\end{HT}
We call a rank-$k$ flat {\em elementary} if it has exactly $k$ points. While elementary flats are clearly more natural than ordinary flats, Hansen's Theorem
itself is arguably the ``right" generalization of the Sylvester-Gallai Theorem.  Bonnice and Edelstein~\cite{BoEd} used Hansen's Theorem
to prove the following result that guarantees the existence of an elementary rank-$k$ flat in any
real-representable matroid with sufficiently high rank. Moreover, their bound of $2k-1$  is best possible, which one sees by considering the direct sum of $k-1$ three-point lines.
\begin{corollary}\label{Watson}
For each integer $k\ge 2$, every rank-$(2k-1)$ real-representable matroid has an elementary rank-$k$ flat.
\end{corollary}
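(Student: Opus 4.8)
The plan is to argue by induction on $k$, with Hansen's Theorem supplying both the base case and the inductive step. For $k=2$ the claim is that every rank-$3$ real-representable matroid has a two-point line; this is the Sylvester-Gallai Theorem, but it also follows directly from Hansen's Theorem, since an ordinary hyperplane of a rank-$3$ matroid is a rank-$2$ flat that is the direct sum of a point and a rank-$1$ flat, hence a line with exactly two points.

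For the inductive step, fix $k\ge 3$, assume the result for $k-1$, and let $M$ be a rank-$(2k-1)$ real-representable matroid; we may assume that $M$ is simple, since passing to the simplification changes neither the lattice of flats nor real-representability. I would apply Hansen's Theorem to obtain an ordinary hyperplane $H$ of $M$, so that $H$ is the direct sum of a point $p$ and a rank-$(2k-3)$ flat $F$ with $p\notin F$. Since $F$ is a flat of $M$, the restriction $M|F$ is real-representable of rank $2k-3 = 2(k-1)-1$, so by the inductive hypothesis it contains an elementary rank-$(k-1)$ flat $G$; moreover, because $F$ is a flat of $M$, every flat of $M|F$ is a flat of $M$, so $G$ is a rank-$(k-1)$ flat of $M$ with exactly $k-1$ points.

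It then remains to verify that $G\cup\{p\}$ is an elementary rank-$k$ flat of $M$. As $G\subseteq F$ and $p\notin F$, the point $p$ lies outside the closure of $G$, so $G\cup\{p\}$ has rank $k$; and since $G\cup\{p\}\subseteq H$ and $H$ is a flat of $M$, the closure of $G\cup\{p\}$ is contained in $H$, hence is a flat of the restriction $M|H$. Now $M|H$ is the direct sum of $M|F$ and $M|\{p\}$, and the flats of a direct sum are exactly the unions of a flat of one factor with a flat of the other; consequently $G\cup\{p\}$ is already a flat of $M|H$, and its points are precisely the $k-1$ points of $G$ together with $\{p\}$. Finally, because $H$ is a flat of $M$, a flat of $M|H$ is a flat of $M$ and a point of $M$ lying in $H$ is a point of $M|H$, so $G\cup\{p\}$ is a rank-$k$ flat of $M$ with exactly $k$ points, completing the induction.

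The argument is short, and I expect the only delicate point to be the matroid bookkeeping in the last step: tracking the correspondence between flats (and points) of $M$, of the restriction $M|H$, and of the two direct-sum factors $M|F$ and $M|\{p\}$, and recognising that the ``ordinary'' decomposition of $H$ is exactly what allows an elementary rank-$(k-1)$ flat of $F$ to be extended by the single point $p$ to an elementary rank-$k$ flat of $M$. The sharpness of the bound $2k-1$, witnessed by the direct sum of $k-1$ three-point lines, is a routine check that I would include only briefly.
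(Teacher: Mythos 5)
Your proposal is correct and follows essentially the same route as the paper: the paper sketches exactly this induction, using Hansen's Theorem to obtain an ordinary hyperplane that is the direct sum of a point $P$ and a rank-$(2k-3)$ flat $H$, finding an elementary rank-$(k-1)$ flat in $H$ inductively, and extending it by $P$. Your additional bookkeeping about flats of restrictions and direct sums is accurate and just fills in the details the paper omits.
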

The proof is quite straightforward; by Hansen's Theorem, such a matroid has a hyperplane consisting of the direct sum of a point $P$ and a rank-$(2k-3)$ flat $H$.  
Inductively we can find an elementary rank-$(k-1)$ flat in $H$ which we can extend to an elementary rank-$k$ flat by adding $P$.

The extension of the Sylvester-Gallai Theorem to complex-representability was famously resolved by Kelly~\cite{Kelly}.
\begin{KT}
Every rank-$4$ complex-representable matroid has a two-point line.
\end{KT}
The condition on the rank here is also best-possible since the rank-$3$ ternary
affine plane, AG$(2,3)$, is complex-representable but it has no two-point line. 

We propose the following analogue of Hansen's Theorem for complex-representable matroids, which is 
based on little more than optimism.
\begin{conjecture}\label{c1}
For each integer $k\ge 2$, every complex-representable matroid with rank at least $k+2$ has an ordinary rank-$k$ flat.
\end{conjecture}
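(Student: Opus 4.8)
The plan is to argue by induction on $k$. For $k=2$ an ordinary rank-$2$ flat is precisely a two-point line, so the statement is Kelly's Theorem; hence suppose $k\ge 3$ and that every complex-representable matroid of rank at least $k+1$ has an ordinary rank-$(k-1)$ flat. Let $M$ be complex-representable of rank at least $k+2$. Since a rank-$k$ flat of the restriction of $M$ to a flat is again a flat of $M$ on which $M$ induces the same matroid, and ``ordinary'' depends only on that induced matroid, we may restrict $M$ to a rank-$(k+2)$ flat and assume $r(M)=k+2$. Throughout I would use the observation that a rank-$k$ flat $F$ is ordinary if and only if $M|F$ has a coloop; this holds in particular whenever $F$ has exactly $k$ points, so the main result of this paper already delivers the conclusion under the far stronger hypothesis $r(M)\ge 4^{k-1}$, and the real problem is to bring the threshold down to $k+2$.

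\textbf{Extension from a two-point line.} Because $r(M)\ge 4$, Kelly's Theorem supplies a two-point line $L=\{e,q\}$, in which $q$ is a coloop of $M|L$. The idea is to build a flag $L=F_2\subsetneq F_3\subsetneq\cdots\subsetneq F_k$ with $r(F_i)=i$, keeping $q$ a coloop of $M|F_i$ at every stage; then $F_k$ is an ordinary rank-$k$ flat. Writing $G_i=F_i\setminus\{q\}$, the requirement is that $G_i$ be a rank-$(i-1)$ flat with $q\notin G_i$ and $\operatorname{cl}(G_i\cup\{q\})=G_i\cup\{q\}$. Taking $G_{i+1}=\operatorname{cl}(G_i\cup\{x\})$ for some $x\notin F_i$ keeps $q$ out of $G_{i+1}$ automatically, but the closure may pull in a ``spoiler'' point $y\ne q$ that lies in $\operatorname{cl}(G_i\cup\{q,x\})$ but not in $\operatorname{cl}(G_i\cup\{x\})$, and any such $y$ destroys the coloop. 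So the work is to choose at each step an $x$ (or a whole flat to adjoin) for which $q$ is the \emph{unique} point of $F_{i+1}$ lying off the hyperplane $\operatorname{cl}(G_i\cup\{x\})$, or else to turn a spoiler into a fresh two-point line and restart.

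\textbf{Induction via contraction.} Alternatively, keep the two-point line $L=\{e,q\}$ and contract $e$. Then $M/e$ is complex-representable of rank $k+1$, so by the inductive hypothesis it has an ordinary rank-$(k-1)$ flat $\overline{F}$ with a coloop $\bar c$ of $(M/e)|\overline{F}$; lifting, $F:=\overline{F}\cup\{e\}$ is a rank-$k$ flat of $M$, and one wants a coloop of $M|F$. The difficulty is that $\bar c$, being a rank-$1$ flat of $M/e$, is the non-$e$ part of a line of $M$ through $e$, so $\bar c$ lifts to a coloop of $M|F$ only when that line is a two-point line \emph{and} $F$ acquires no point outside the hyperplane $F\setminus\{\bar c\}$. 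Since $\overline{F}$, and hence $F$, is essentially forced on us, there is no evident way to secure either condition; making this route succeed would mean choosing the contracted element, the lifted flag, and the ordinary flat of $M/e$ in a coordinated way, perhaps by iterating the construction.

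\textbf{The main obstacle.} I expect the crux to be exactly this loss of control under closure and contraction. In the real setting Hansen's Theorem is proved by an extremal incidence argument that sweeps a hyperplane and compares the point counts on its two sides, a method that leans on the order of $\mathbb{R}$; over $\mathbb{C}$ there is no order, and Kelly's Theorem is the only Sylvester--Gallai-type fact available. A plausible way forward is to marry the inductive scheme above with the algebraic-geometric input behind Kelly's Theorem --- a Hirzebruch-type inequality after a generic projection to low dimension, or the polynomial and design methods of Barak--Dvir--Wigderson--Yehudayoff and Dvir--Saraf--Wigderson mentioned in the abstract --- so as to force a rank-$k$ flat with a coloop as soon as the rank exceeds $k+1$. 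Reaching the threshold $k+2$ exactly, rather than some larger function of $k$, looks like the hardest point, and would demand understanding the extremal configurations, beginning with $\mathrm{AG}(2,3)$ in the case $k=2$.
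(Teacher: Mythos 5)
There is no proof to compare against here: the statement you were asked to prove is Conjecture~\ref{c1}, which the paper explicitly leaves open and describes as ``based on little more than optimism.'' What the paper actually proves is the much weaker Main Theorem, with the rank threshold $4(k-1)$ in place of $k+2$. Your proposal, read on its own terms, is also not a proof --- it is a pair of plans, each of which you yourself flag as broken. That self-assessment is accurate, and the gaps are real. In the flag-building route, you never supply a mechanism for choosing the next element $x$ so that no spoiler point enters $\operatorname{cl}(G_i\cup\{q,x\})\setminus\operatorname{cl}(G_i\cup\{x\})$; nothing in Kelly's Theorem or in complex representability rules out the possibility that \emph{every} choice of $x$ produces a spoiler at some stage, and without such a guarantee the induction on $i$ does not get off the ground. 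In the contraction route, the coloop $\bar c$ of $(M/e)|\overline{F}$ genuinely fails to lift: the preimage of $\bar c$ is a line of $M$ through $e$ that may carry a third point, and even when it does not, $F=\overline{F}\cup\{e\}$ may contain points outside $\operatorname{cl}(F\setminus\{\bar c\})\cup\{\bar c\}$ other than $\bar c$ itself. Since the ordinary flat of $M/e$ is handed to you by the inductive hypothesis with no freedom of choice, there is no leverage to repair this.

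For contrast, the paper's proof of its $4(k-1)$ bound shows exactly why the extra rank is needed: one fixes a rank-$4(k-2)$ flat $F$, applies Kelly's Theorem to $M/F$ to produce flats $F_1,F_2$ meeting in $F$, restricts to $N=M|(F_1\cup F_2)$, contracts a two-point line $\{x,y\}$ of $N$ to reach rank $4(k-2)$ where induction applies, and then uses the two carefully located two-point lines (Claim~\ref{2point}) to certify that the lifted flat $F'\cup\{z\}$ really is a flat (Claim~\ref{ord}). The factor of $4$ arises precisely because Kelly's Theorem must be invoked with four ranks of room at every level of the induction --- the very control your sketch is missing. Closing the gap from $4(k-1)$ down to $k+2$ is, as far as this paper is concerned, an open problem, and your proposal does not resolve it.
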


Barak, Dvir, Wigderson, and Yehudayoff~\cite{Barak} proved that rank at least $\Omega(k^2)$ suffices and 
Dvir, Saraf, and Wigderson~\cite[Theorem 1.14]{Dvir} later improved the bound to $\Omega(k)$, but they did not explicitly
determine the constant. Using more elementary methods we prove the following generalization of Kelly's Theorem:
\begin{thm}\label{main_complex}
For each integer $k\ge 2$, every complex-representable matroid with rank at least $4(k-1)$ has an ordinary rank-$k$ flat.
\end{thm}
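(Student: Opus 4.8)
The proof is by induction on $k$. The base case $k=2$ is exactly Kelly's Theorem: an ordinary rank-$2$ flat is a two-point line, and $4(2-1)=4$.

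For the inductive step, fix $k\ge 3$ and let $M$ be complex-representable with $r(M)\ge 4(k-1)$. First I would record a lifting principle. Suppose $F$ is a rank-$(k-2)$ flat of $M$, $P$ is a point of $M$ with $P\not\subseteq F$, and $P\oplus F$ is a flat of $M$. Then $P\oplus F$ is an ordinary rank-$(k-1)$ flat, and in $M/F$ the image $\bar P$ of $P$ is a point whose preimage is exactly $P$. If moreover $\bar P$ lies on a two-point line $\{\bar P,\bar Q\}$ of $M/F$, then, writing $Q'$ for the preimage of $\bar Q$ in $M$, the rank-$k$ flat $H=\mathrm{cl}_M(F\cup P\cup Q')$ equals $\mathrm{cl}_M(F\cup Q')\oplus P$: every element of $H$ maps into the two-point line $\{\bar P,\bar Q\}$ of $M/F$ or to a loop, hence lies in $\mathrm{cl}_M(F\cup Q')$ or in the preimage $P$ of $\bar P$, and the ranks add correctly. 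Thus $H$ is an ordinary rank-$k$ flat of $M$. So the goal reduces to producing an ordinary rank-$(k-1)$ flat $P\oplus F$ of $M$, with $r(F)=k-2$, such that $\bar P$ lies on a two-point line of $M/F$.

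By the induction hypothesis with parameter $k-1$ (legitimate since $r(M)\ge 4(k-1)>4(k-2)$), $M$ has an ordinary rank-$(k-1)$ flat $P\oplus F$. The contraction $M/F$ is complex-representable of rank $r(M)-(k-2)\ge 3k-2\ge 4$, so Kelly's Theorem provides a two-point line in $M/F$; the crux is to arrange one through $\bar P$.

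This is where I expect the real difficulty to be, and it is also where the extra $4$ units of rank are consumed. One cannot use an arbitrary ordinary rank-$(k-1)$ flat: a complex-representable matroid can have points lying on no two-point line at all --- glued copies of the Hesse configuration $\mathrm{AG}(2,3)$ exhibit such points in every rank at least $4$ --- and the contraction might send $\bar P$ among them. The plan is to choose $P\oplus F$ carefully rather than taking it verbatim from the induction hypothesis: build the flat outward from a two-point line of $M$ furnished by Kelly's Theorem, using one endpoint as the isolated point $P$ and enlarging the complementary flat to rank $k-2$ while maintaining, at every stage, a two-point line through $\bar P$ in the relevant contraction. The induction hypothesis for $k-1$ accounts for $4(k-2)$ units of rank and this second, control-ensuring application of Kelly's Theorem for the remaining $4$, giving the bound $4(k-1)$. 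Reconciling ``the flat is ordinary and anchored at a two-point line'' with ``a two-point line survives the contraction'' at each step is the part I expect to require the most care.
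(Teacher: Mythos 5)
Your overall architecture is sound and your ``lifting principle'' is correct: if $P\oplus F$ is an ordinary rank-$(k-1)$ flat and the point of $M/F$ corresponding to $P$ lies on a two-point line, then the flat spanned by $F\cup P\cup Q'$ is an ordinary rank-$k$ flat. You have also correctly located the real difficulty --- forcing a two-point line of the contraction through the distinguished point --- but you have not closed it. The plan you sketch (grow the complementary flat one rank at a time from a two-point line of $M$, ``maintaining, at every stage, a two-point line through $\bar P$'') is precisely the step that does not obviously work: there is no mechanism offered for why the two-point line survives each enlargement, and your rank accounting ($4(k-2)$ for the induction plus a single extra application of Kelly) does not match a construction with $k-2$ enlargement stages each of which might need fresh rank to control. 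As it stands the inductive step is not proved.

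The paper closes exactly this gap by running the two ingredients in the opposite order and, crucially, by \emph{restricting to a small ambient flat before invoking induction}. Take a rank-$4(k-2)$ flat $F$ of $M$; Kelly's Theorem applied to $M/F$ (which has rank at least $4$) yields two rank-$(4(k-2)+1)$ flats $F_1,F_2$ with $F_1\cap F_2=F$ and $r(F_1\cup F_2)=4(k-2)+2$. Set $N=M|(F_1\cup F_2)$ and pick $x\in F_1\setminus F$, $y\in F_2\setminus F$, so that $\{x,y\}$ is a two-point line of $N$. Now contract the two-point line $\{x,y\}$ (not $F$) and apply the inductive hypothesis to $N/\{x,y\}$, which has rank exactly $4(k-2)$, to obtain an ordinary rank-$(k-1)$ flat $H\oplus P$ there. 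Because everything has been confined to the ``book'' $F_1\cup F_2$, the point $P$ is forced to interact with $\{x,y\}$: if some $z\in P$ lies outside $F$, say $z\in F_1\setminus F$, then the line through $z$ and $y$ meets each of $F_1$ and $F_2$ in a single point and so has exactly two points; otherwise $P\subseteq F$ and the line through $x$ and any $z\in P$ meets the flat $F$ only in $z$, so it too has exactly two points. This confinement to $F_1\cup F_2$ is the idea missing from your proposal; without it there is no way to guarantee the two-point line your lifting principle requires. The final assembly then plays the role of your lifting step, with a twist: one takes a rank-$(k-1)$ flat $F'$ of the rank-$k$ flat $H\cup\{x,y\}$ containing $x$ but not $y$, and shows that $F'\cup\{z\}$ is itself a flat, hence an ordinary rank-$k$ flat.
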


As with real-representable matroids, we get a corollary for elementary flats; we omit the easy inductive proof as it is essentially the
same as the proof of Corollary~\ref{Watson} that we sketched above. (Similar results also appear in~\cite{Barak,Dvir}.)
\begin{corollary}
For each integer $k\ge 2$, every complex-representable matroid with rank at least $4^{k-1}$ has an elementary rank-$k$ flat.
\end{corollary}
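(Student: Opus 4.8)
The plan is to imitate the proof of Corollary~\ref{Watson}, using the Main Theorem in place of Hansen's Theorem and inducting on $k$. The base case $k=2$ is immediate: a complex-representable matroid of rank at least $4^{1}=4$ has a two-point line by Kelly's Theorem, and a two-point line is precisely an elementary rank-$2$ flat.

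For the inductive step I would assume the statement for $k-1$ and consider a complex-representable matroid $M$ with $\operatorname{rank}(M)\ge 4^{k-1}$. The key is to apply the Main Theorem with a judiciously chosen parameter: setting $j:=4^{k-2}+1$ gives $4(j-1)=4^{k-1}\le\operatorname{rank}(M)$, so the Main Theorem yields an ordinary rank-$j$ flat $F$ of $M$. Thus $M|F$ is the direct sum of a one-element matroid on some $\{P\}$ and a matroid $N$ of rank $j-1=4^{k-2}$; writing $H$ for the ground set of $N$, we see that $H$ is a flat of $M$ (a flat of the flat $F$) of rank $4^{k-2}$ and that $M|H=N$ is complex-representable, being a restriction of $M$. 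Since $4^{k-2}=4^{(k-1)-1}$, the inductive hypothesis applied to $M|H$ produces an elementary rank-$(k-1)$ flat $G\subseteq H$, which is also a flat of $M$. Finally, $G\cup\{P\}$ is a flat of the direct sum $M|F$ (a union of a flat of $N$ with a flat of the one-element matroid), it has rank $(k-1)+1=k$, and it has exactly $k$ points because $P\notin H$; hence $G\cup\{P\}$ is an elementary rank-$k$ flat of $M$, completing the induction.

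I do not anticipate any real obstacle here — this genuinely is ``essentially the same as the proof of Corollary~\ref{Watson}''. The one point that needs care is the choice of $j$: applying the Main Theorem directly with parameter $k$ would produce an ordinary rank-$k$ flat whose rank-$(k-1)$ part is far too small to feed back into the induction, whereas taking $j$ large enough that this part has rank $4^{k-2}$ is exactly what closes the recursion. The resulting recurrence for the rank bound $b_k$ is $b_k=4b_{k-1}$ with $b_2=4$, which gives $b_k=4^{k-1}$ and explains why the bound is exponential rather than linear in $k$.
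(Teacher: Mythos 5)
Your proof is correct and follows exactly the route the paper intends (the paper omits the proof, saying it mirrors the sketch of Corollary~\ref{Watson}): apply the Main Theorem with the parameter chosen so that the rank-$(j-1)$ part of the ordinary flat has rank $4^{k-2}$, recurse into it, and adjoin the point $P$. Your observation about the choice of $j=4^{k-2}+1$ driving the recurrence $b_k=4b_{k-1}$ is precisely the point the paper leaves implicit.
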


Conjecture~\ref{c1}, if true, would give the following result.
\begin{conjecture}\label{c2}
For each integer $k\ge 2$, every complex-representable matroid with rank at least $3(k-1)+1$ has an elementary rank-$k$ flat.
\end{conjecture}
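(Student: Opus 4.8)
The plan is to derive this statement from Conjecture~\ref{c1} by an induction on $k$ that exactly parallels the proof of Corollary~\ref{Watson}, with Conjecture~\ref{c1} playing the role that Hansen's Theorem plays there. Write $g(k):=3(k-1)+1$ for the claimed threshold, and assume Conjecture~\ref{c1} holds. For the base case $k=2$ we need rank at least $g(2)=4$; here Conjecture~\ref{c1} with parameter $2$ is exactly Kelly's Theorem, and an ordinary rank-$2$ flat is the direct sum of two points, that is, a two-point line, which is an elementary rank-$2$ flat.

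For the inductive step, suppose the statement holds for $k-1$, and let $M$ be a complex-representable matroid of rank at least $g(k)=3(k-1)+1$. The crux is to choose the right parameter when invoking Conjecture~\ref{c1}: set $j:=g(k-1)+1=3(k-1)-1=3k-4$. Then $j\ge 2$, and the rank of $M$ is at least $3(k-1)+1=j+2$, so Conjecture~\ref{c1} supplies an ordinary rank-$j$ flat $F$ of $M$. Write $F=\{P\}\oplus H$, where $P$ is a point and $H$ is a rank-$(j-1)$ flat. By construction the rank of $H$ is $j-1=3k-5=3((k-1)-1)+1=g(k-1)$, which is precisely the threshold of the statement for $k-1$.

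Now I would pass to the restriction $M|H$, which is again complex-representable, and apply the inductive hypothesis to obtain an elementary rank-$(k-1)$ flat $E$ of $M|H$. Since $H$ is a flat of $M$, the flats of $M|H$ are exactly the flats of $M$ contained in $H$; hence $E$ is a flat of $M$ with rank $k-1$ and exactly $k-1$ points. Because $F=\{P\}\oplus H$ is a direct sum, $\{P\}\oplus E$ is a flat of $M|F$, and as $F$ is a flat of $M$, it follows that $\{P\}\oplus E$ is a flat of $M$. It has rank $1+(k-1)=k$, and the rank-$1$ flats of a direct sum are exactly those of the two parts, so it has exactly $1+(k-1)=k$ points; this is the desired elementary rank-$k$ flat. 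Tracking the rank budget, each step applies Conjecture~\ref{c1} at parameter $j=g(k-1)+1$ and therefore costs $j+2=g(k-1)+3$ in rank, so the threshold obeys the additive recursion $g(k)=g(k-1)+3$ with $g(2)=4$, which gives exactly $g(k)=3(k-1)+1$.

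The only genuine obstacle is Conjecture~\ref{c1} itself, which remains open; granting it, the implication above is entirely routine. The single point that requires care is the choice of $j$: taking $j$ one larger than the $(k-1)$-threshold guarantees that the non-point part $H$ of the ordinary flat has exactly enough rank to feed the induction, and this is what produces an additive constant $3$ rather than the multiplicative factor $4$. Indeed, the same argument run with the (unconditional) Main Theorem, whose threshold $4(k-1)$ forces $j-1=g(k-1)$ to require rank $4(j-1)=4\,g(k-1)$, yields the multiplicative recursion $g(k)=4\,g(k-1)$ and hence the weaker exponential bound $4^{k-1}$; the linear threshold $k+2$ in Conjecture~\ref{c1} is exactly what collapses this to the linear bound $3(k-1)+1$.
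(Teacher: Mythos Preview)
Your argument is correct and is exactly the approach the paper has in mind: it does not spell out a proof of Conjecture~\ref{c2} but merely asserts that Conjecture~\ref{c1} would imply it, pointing to the inductive proof of Corollary~\ref{Watson} as the template. Your explicit choice of $j=g(k-1)+1=3k-4$ and the verification that the recursion $g(k)=g(k-1)+3$ with $g(2)=4$ yields $g(k)=3(k-1)+1$ fills in precisely the details the paper omits.
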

Note that this conjecture, if true, is best possible, since the direct sum of $k-1$ copies of AG$(2,3)$ has rank $3(k-1)$ but does not have 
an elementary rank-$k$ flat.

The papers \cite{Barak,Dvir} prove a ``robust" version of the main theorem which seems beyond the reach of our methods. On the other hand, one strength of our technique is that it is purely matroidal, and thus extends to any minor-closed class for which the Sylvester-Gallai Theorem holds in sufficiently high rank; see Theorem~\ref{general}. For example, every orientable matroid with rank at least three has a two-point line (see, for instance,~\cite[Proposition 6.1.1]{Bjorner}), so we immediately get the following consequence.

\begin{corollary}
For each $k \geq 2$, every rank-$3(k-1)$ orientable matroid has an ordinary rank-$k$ flat.
\end{corollary}

It is conjectured that Hansen's Theorem itself holds for orientable matroids; see~\cite[Section 6.5]{Bjorner}.
For a more detailed survey on the Sylvester-Gallai Theorem and its generalizations, see Borwein and Moser~\cite{BoMo}.

\section{Matroid theory for geometers}

We use the notation and terminology of Oxley~\cite{Oxley}, although we  use ``direct sum" in a slightly unconventional way.

The change from a classical geometric perspective to a matroid-theoretic perspective is essentially just a change of terminology and is based upon the well-known translation from points in geometry to vectors in linear algebra. Given a finite set of points in $\bC^d$ we can lift each point to vector in $\bC^{d+1}$ by appending a single entry equal to one. Then any subset of the points is affinely independent if and only if their associated vectors are linearly independent. 

Let $E$ be a finite set and consider a representation $E\rightarrow \bC^{d+1}$ of the elements in the set as complex-valued vectors. We call a subset $X\subseteq E$ {\em independent} if its associated set of vectors is linearly independent.
By convention, the empty set is considered independent. Let $\cI$ denote the collection of all independent subsets of $E$. We refer to $M=(E,\cI)$ as a {\em complex-representable matroid}.  Thus, the matroid $M$ captures the combinatorial essence of the representation $E\rightarrow \bC^{d+1}$ without recording the specific coordinatization. In this paper we do not specifically need to refer to the representation itself; all of the arguments are purely matroid-theoretic, using Kelly's Theorem as the base case in an induction. We do not know of a matroid-theoretic proof of Kelly's Theorem, itself, but there are other proofs  (see \cite{Elkies},\cite{Dvir}) that are more elementary than Kelly's. 

The definition of complex-representable matroids allows for elements to be represented by the zero-vector; such elements are called {\em loops}. The definition also allows for two elements to be represented by vectors that are scalar multiples of each other; such pairs are said to be in {\em parallel}. 
We call a matroid {\em simple} if it has no loops and no parallel pairs. The matroids coming from geometry are simple, so it suffices for us to consider simple matroids. Nevertheless, parallel pairs do arise in our proof as a result of ``contraction" (or projection), but we avoid creating loops by only contracting flats.

The {\em rank} of a set $X\subseteq E$, denoted $r(X)$, is the maximum size of an independent subset of $X$.
The {\em rank of $M$}, denoted $r(M)$, is the rank of $E$.  A {\em rank-$k$ flat} of $M$ is an inclusion-wise maximal subset of $E$ with rank $k$. Thus a {\em point} is a rank-$1$ flat, a {\em line} is a rank-$2$ flat, a {\em plane} is a rank-$3$ flat, and a {\em hyperplane} is a rank-$(r(M)-1)$ flat. One convenient consequence of the matroid terminology is that the points of a rank-$3$ matroid cannot all lie on one line.

We say that a flat $F$ of a matroid is the {\em direct sum} of flats $F_1$ and $F_2$ if $F=F_1\cup F_2$ and $r(F)=r(F_1)+r(F_2)$.
This differs from the standard usage in matroid theory where ``direct sum'' is considered as a composition operation.

Finally, we require two operations, ``restriction" and ``contraction", on matroids. For a set $X\subseteq E$, the restriction of $M$ to the set $X$ is defined as you would expect and is denoted by $M|X$.
For a set $C\subseteq E$, we define the {\em contraction} of $C$ in $M$ by $M\con C:= (E\setminus C,\cI')$ where $I\subseteq E\setminus C$ is in $\cI'$ if and only if $r(C\cup I) = r(C)+|I|$. We note that $M\con C$ is a complex-representable matroid; one obtains the representation by ``projecting out" or ``quotienting on" the subspace spanned by $C$. After projection the elements of $C$ become loops so we delete them. Any elements spanned by $C$ will be loops in $M\con C$, but if $M$ is loopless and $C$ is a flat, then $M\con C$ will be loopless. Note that, if $C$ is a flat of $M$, then a set $F\subseteq E\setminus C$ is a rank-$k$ flat of $M\con C$ if and only if $F\cup C$ is a rank-$(r(C)+k)$ flat of $M$.

\section{The proof}

In this section we prove our main theorem, which we now state in full generality.

\begin{theorem}\label{general}
Let $\cM$ be a minor-closed class of matroids for which there exists an integer $C_{\cM} > 0$ such that, if $M$ is a matroid in $\cM$ with rank at least $C_{\cM}$, then $M$ has a two-point line. Then for each $k \geq 2$, every matroid in $\cM$ with rank at least $C_{\cM}(k-1)$ has an ordinary rank-$k$ flat. 
\end{theorem}

\begin{proof} Suppose that the result fails and consider a counterexample $(M,k)$ with $k$ minimum. We may assume that $M$ is simple. By hypothesis, $k\ge 3$. Let $F$ be a rank-$C_{\cM}(k-2)$ flat in $M$. Again by hypothesis, $M /F$ has a two-point line, and so there exist two rank-$(C_{\cM}(k-2)+1)$ flats $F_{1}$ and $F_{2}$ of $M$ such that $F = F_{1} \cap F_{2}$, and $F_{1} \cup F_{2}$ is a rank-$(C_{\cM}(k-2)+2)$ flat.

Let $N := M | (F_{1} \cup F_{2})$ and fix elements $x \in F_{1} \setminus F$ and $y \in F_{2} \setminus F$; note that $\{x,y\}$ is a two-point line in $N$. 
Now $N\con \{x,y\}$ has rank $C_{\cM}(k-2)$ and, so, by our choice of counterexample, $N\con\{x,y\}$ has an ordinary rank-$(k-1)$ flat.
That is, $N\con \{x,y\}$ has a rank-$(k-1)$ flat consisting of the direct sum of a rank-$(k-2)$ flat $H$ and a point $P$.
In $N$, the sets $H \cup P \cup \{x,y\}$, $H \cup \{x,y\}$, and  $P \cup \{x,y\}$ are flats of rank $(k+1)$, $k$, and $3$ respectively.

\begin{claim}\label{2point}
There exist elements $z \in P$ and $w \in \{x,y\}$ such that $\{z,w\}$ is a two-point line in $N$.
\end{claim}

\begin{proof}[Proof of claim.]
Choose $z\in P$ so that, if possible, $z\not\in F$.  We first consider the case that $z\not\in F$.
Since $P\subseteq E(N) = F_1\cup F_2$, by symmetry, we may assume that $z\in F_1$.
Since $z\in F_1\setminus F$ and $y\in F_2\setminus F$, the line spanned by $\{z,y\}$ is not contained in either $F_1$ nor $F_2$ and hence has exactly two points, as required. Therefore we may assume that $z\in F$ and, consequently, that $P\subseteq F$. The line spanned by $\{x,z\}$ is contained in $\{x\}\cup P\subseteq \{x\}\cup F$ and meets the flat $F$ in one point, namely $\{z\}$. Therefore $\{x,z\}$ is a two-point line, as required.
\end{proof}

By Claim 1 and symmetry, we may assume that there exists $z \in P$ such that $\{x,z\}$ is a two-point line in $M$. 
Choose a rank-$(k-1)$ flat $F'$ in the rank-$k$ flat $H\cup\{x,y\}$ that contains $x$ but not $y$. The following claim gives the required contradiction.
\begin{claim}\label{ord}
$F' \cup \{z\}$ is an ordinary rank-$k$ flat in $N$.
\end{claim}

\begin{proof}[Proof of claim.]
Since $F'$ is a rank-$(k-1)$ flat and $\{z\}$ is a point not contained in $F'$, it suffices to show that $F' \cup \{z\}$ is a flat.
Let $H_0$ denote the rank-$k$ flat spanned by $F' \cup \{z\}$. Since $H_0\subseteq H\cup P\cup\{x,y\}$, it suffices to show that
$H_0\cap (P\cup\{x,y\} ) = \{x,z\}$ and $H_0\cap (H\cup\{x,y\}) = F'$. Now $\{x,z\}$ is a two-point line in the plane $P\cup\{x,y\}$
and $\{x,z\}\subseteq H_0$, so, if $H_0\cap (P\cup\{x,y\} ) \neq \{x,z\}$, then $H_0$ contains the plane $P\cup\{x,y\}$, and, in particular,
$y\in H_0$. Similarly, $F'$ is a rank-$(k-1)$ flat in the rank-$k$ flat $H\cup\{x,y\}$ and $F'\subseteq H_0$, so, if
$H_0\cap (H\cup\{x,y\}) \neq F'$, then $H_0$ contains $H\cup\{x,y\}$, and, in particular, $y\in H_0$. 
In either case,  $y\in H_0$. However, if $y\in H_0$, then $H_0$ contains both the plane $P\cup\{x,y\}$ and the rank-$k$ flat $H\cup\{x,y\}$, but 
$H_0$ is a rank-$k$ flat whereas $P\cup H\cup\{x,y\}$ has rank $(k+1)$; this contradiction completes the proof.
\end{proof}

Claim 2 completes the proof.
\end{proof}

\section{Acknowledgements}
We thank James Davies, Rutger Campbell and Abhibhav Garg for interesting discussions on the topic of this paper. We also thank the anonymous referees for their helpful comments.


\begin{thebibliography}{}

\bibitem{Barak}
B. Barak, Z. Dvir, A. Wigderson, A. Yehudayoff, Rank bounds for design matrices with applications to combinatorial geometry and locally correctable codes. Proceedings of the 43rd annual ACM symposium on theory of computing, STOC 2011, San Jose, CA, ACM, New York (2011) 519-528.

\bibitem{Bjorner}
A. Bj\"orner, M. Las Vergnas, B. Sturmfels, N. White, G.M. Ziegler, {\it{Oriented matroids (second edition)}}, Cambridge University Press, Cambridge, (1999).  

\bibitem{BoMo}
P. Borwein, W.O.J Moser, A survey of Sylvester's problem and its generalizations,
Aequationes Math. 40 (1990) 111-135.

\bibitem{BoEd}
W. Bonnice, M. Edelstein, Flats associated with the finite sets in $\mathbb{P}^d$. Niew. Arch. Wisk. 15 (1967) 11-14.

\bibitem{Dvir}
Z. Dvir, S. Saraf, A. Wigderson, Improved rank bounds for design matrices and a new proof of Kelly's theorem. Forum of Mathematics, Sigma (Vol. 2), Cambridge University Press (2014). 

\bibitem{Elkies}
N. Elkies, L. M. Pretorius, K. J. Swanepoel, Sylvester-Gallai theorems for complex numbers and quaternions. Discrete Comput. Geom. 35(3) (2006) 361-373.

\bibitem{Gallai}
T. Gallai, Solution to Problem 4065. Amer. Math. Monthly 51 (1944) 169--171.

\bibitem{Hansen}
S. Hansen, A generalization of a theorem of Sylvester on the lines determined by a finite point set. Math. Scand. 16 (1965) 175-180.

\bibitem{Kelly}
L.M. Kelly, A resolution of the Sylvester-Gallai problem of J.-P. Serre. Discrete Comput. Geom. 1 (1986) 101-104. 

\bibitem{Melchior}
E. Melchior, \"Uber vielseite der projektiven ebene. Deutsche Math. 5(1) (1941) 461-475. 

\bibitem{motzkin}
T.S. Motzkin, The lines and planes connecting the points of a finite set, Trans. Am. Math. Soc.
70 (1951), 451-464.

\bibitem{Oxley}
J. Oxley, \textit{Matroid theory, second ed.}, Oxford University Press, New York, (2011).

\bibitem{Sylvester}
J.J. Sylvester, Mathematical Question 11851. Educational Times 59 (1893) 98.







\end{thebibliography}
\end{document}